\newtheorem{thm}{Theorem}[section]
\newtheorem{cor}[thm]{Corollary}
\newtheorem{prop}[thm]{Proposition}
\theoremstyle{definition}
\theoremstyle{remark}
\newtheorem{rem}[thm]{Remark}
\begin{document}

\title{Flatness of anisotropic
minimal graphs in $\mathbb{R}^{n+1}$}
\author{Wenkui Du}
\address{Department of Mathematics, University of Toronto}
\email{\tt wenkui.du@mail.utoronto.ca}
\author{Yang Yang}
\address{Department of Mathematics, Johns Hopkins University}
\email{\tt yyang@jhu.edu}

\begin{abstract}
We prove a Bernstein  theorem for $\Phi$-anisotropic  minimal hypersurfaces  in all dimensional Euclidean spaces that the only entire smooth  solutions $u: \mathbb{R}^{n}\rightarrow \mathbb{R}$ of $\Phi$-anisotropic minimal hypersurfaces equation  are affine functions provided the anisotropic area functional integrand $\Phi$ is sufficiently $C^{3}$-close to classical area functional integrand and $|\nabla u(x)|=o(|x|^{\varepsilon})$ for $\varepsilon\leq \varepsilon_{0}(n, \Phi)$ with the constant $\varepsilon_{0}(n, \Phi)>0$.
\end{abstract}

\maketitle

\section{Introduction}

In this paper we study entire graphical  solutions of  anisotropic minimal hypersurfaces in $\mathbb{R}^{n+1}$ as critical points of  parametric elliptic functionals, which assign each oriented smooth hypersurface $\Sigma \subset \mathbb{R}^{n+1}$ the anisotropic area
\begin{equation}\label{PEF}
 \mathcal{A}_{\Phi} (\Sigma) = \int_{\Sigma} \Phi(\nu)\,d\mu_{\Sigma},
\end{equation}
where $\nu$ is a unit normal vector field along $\Sigma$, and $\Phi$ is a uniformly elliptic integrand, namely, a one-homogeneous function on $\mathbb{R}^{n+1}$ that is positive and smooth on $\mathbb{S}^n$, and satisfies in addition that $\{\Phi < 1\}$ is uniformly convex.  In particular, when $\Phi(x) =|x|$, $\mathcal{A}_{\Phi}(\Sigma)$ recovers the area functional  and the associated critical points are classical minimal hypersurfaces.  These functionals have recently attracted  significant attention due to their practical applications and theoretical significance (\cite{CL1}, \cite{DD}, \cite{DDG}, \cite{DM}, \cite{DT}, \cite{DMMN}, \cite{FM}, \cite{FMP}). Specifically, they emerge in the context of crystal surface models and Finsler geometry. They also present some challenging and technical difficulties, particularly due to the absence of a monotonicity formula (see \cite{All}), which often leads to more general and illuminating ideas and proofs even in the classical area functional case.

If $\Sigma$ is the graph of a smooth function $u$ on a domain $\Omega \subset \mathbb{R}^n$, then we can rewrite the integral \eqref{PEF} as
\begin{equation}\label{def2_Aphi}
    \mathcal{A}_{\Phi} (\Sigma) = \int_{\Omega} \varphi (\nabla u) \,dx,
\end{equation}
where 
\begin{equation}
    \varphi (x) : = \Phi (-x, \,1).
\end{equation}
Thus, if $\Sigma$ is a critical point of $\mathcal{A}_{\Phi}$, then $u$ solves the Euler-Lagrange equation
\begin{equation}\label{ELE}
    \mathrm{div}(\nabla \varphi (\nabla u)) = 0
\end{equation} in $\Omega$,  {which is equivalent to that the anisotropic mean curvature later defined in \eqref{vanish_Hphi} vanishes}. The function $\varphi$ exhibits local uniform convexity due to the uniform ellipticity of $\Phi$. However, the ratio of the minimum to maximum eigenvalues of $D^2\varphi$ becomes degenerate at infinity. Therefore, equation \eqref{ELE} can be characterized as a quasilinear degenerate elliptic partial differential equation for $u$, which is well-known in the literature as a variational equation of minimal surface or mean curvature type (see e.g. Chapter 16 in \cite{GT}).

The Bernstein problem asks when the minimal hypersurfaces in $\mathbb{R}^{n+1}$ are flat, which is closely related to the regularity of minimal hypersurfaces. It is well-known that Bernstein theorem says that  the only  entire smooth graphical solutions of the classical graphical minimal hypersurface equation on $\mathbb{R}^n$
are affine functions, provided that $n\leq 7$ (see \cite{A}, \cite{D}, \cite{F}, \cite{S})\footnote{More generally, by the work of Fischer-Colbrie, Schoen, do Carmo, Peng, and Pogorelov (\cite{DCP}, \cite{P}, \cite{FCS}) and the recent breakthrough of Chodosh and Li \cite{CL2}, Bernstein theorem holds for complete connected two sided stable minimal hypersurfaces in $\mathbb{R}^{n+1}$ where $n=2, 3$. For $n=4, 5$, Bernstein theorem for complete connected two sided stable minimal hypersurfaces holds under assuming certain volume growth conditions (see \cite{SSY},\cite{SS}, \cite{S}).},  and the result fails when $n>7$ by the counter example  of Bombieri, De Giorgi and Giusti in \cite{BDG}.  Later, Ecker and Huisken \cite{EH} extended Bernstein theorem to all dimensions, assuming in addition 
\begin{equation}
    |\nabla u(x)| = o(|x|).
\end{equation}

In this paper, we consider Bernstein problem for anisotropic minimal hypersurfaces in $\mathbb{R}^{n+1}$. For anisotropic area functional $\mathcal{A}_{\Phi}$ with uniformly elliptic integrand $\Phi$, by the works of Jenkins \cite{J} and Simon \cite{Si2} all entire smooth $\mathcal{A}_{\Phi}$-minimal graphs are flat when $n = 2, \,3$. A recent breakthrough \cite{MY1, MY2} by Mooney and the second author, and the work \cite{M} of Mooney showed that the anisotropic Bernstein theorem for entire smooth graphical solutions can only hold up to $n=3$ by constructing nonflat minimizers  when $n= 4, 5, 6$ for certain $\Phi$ away from the area integrand; for the case $n=4$, the gradient of solutions they constructed has growth rate between $(0, \,1/2)$ at infinity \cite{MY2}, so the result in \cite{EH} does not hold for general uniformly elliptic anisotropic area functionals. In \cite[Conjecture 3.3]{MY1}, Mooney and the second author conjectured that if the solution $u$ of an anisotropic minimal hypersurface equation satisfies $\sup_{B_r} |\nabla u| = o(r^{\varepsilon})$ for some $\varepsilon$ depending on $n$ and $\Phi$, then the solution $u$ must be an affine function.


The known results for anisotropic minimal hypersurfaces are, for example, the Bernstein theorem still holds up to dimension $n = 7$ by work of Simon \cite{Si2} under small $C^3$ perturbations of the integrand in (\ref{PEF}) from area on $\mathbb{S}^{n}$, and the flatness of stable {anisotropic minimal hypersurfaces} holds in dimension $n = 3$ under $C^4$ perturbations by Chodosh-Li \cite[Corollary 1.5]{CL1}\footnote{Indeed  by \cite[Remark 1.6]{CL1},  their result still holds in weaker $C^{2, \alpha}$ closeness assumption, but the closeness cannot be quantitatively estimated due to using a contradiction argument from \cite{Si2}.}. It would be interesting to determine whether or not the topology in which closeness is measured could be relaxed to $C^3$-closeness sense  in all dimensions such that Bernstein theorem holds. The purpose of this paper is to give an affirmative answer to this question and Conjecture 3.3 in \cite{MY1} for $\mathcal{A}_{\Phi}$-minimal graphs under assuming certain growth condition on $|\nabla u|$ and $C^3$-closeness between $\Phi$ and classical area integrand. In particular  our main result is:\\
\begin{thm} \label{Main Theorem}
Let $u$ be an entire smooth solution to the anisotropic minimal surface  equation 
\begin{equation}
    \mathrm{div}(\nabla \varphi (\nabla u)) = \partial^2_{ij}\varphi(\nabla u)\partial^2_{ij}u=0
\end{equation}
on $\mathbb{R}^n$, corresponding to a uniformly elliptic anisotropic area functional  $\mathcal{A}_{\Phi}$ in \eqref{PEF}.
There is a constant $\delta(n)>0$, such that if
\begin{equation}
    \|\Phi-1\|_{C^3(\mathbb{S}^n)}\leq\delta(n),
\end{equation}
then there is
another constant $\varepsilon_{0}(n, \Phi)>0$, such that for every $\varepsilon \leq \varepsilon_{0}(n, \Phi)$  if
\begin{equation}\label{gradient_growth}
    \sup_{B_r} |\nabla u| = o(r^{\varepsilon}),
\end{equation}
then $u$ is an affine function. Moreover, $\varepsilon_{0}(n, \Phi)$ converges to $1$ as $\|\Phi-1\|_{C^3(\mathbb{S}^n)}\to 0$.\\
\end{thm}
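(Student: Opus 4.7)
The plan is to extend the Ecker-Huisken Bernstein theorem from the area functional to the $\Phi$-anisotropic setting, using the $C^3$-closeness of $\Phi$ to the area integrand to control the anisotropic correction terms in the relevant differential inequalities.

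First, I would introduce the anisotropic analogues of the two driving quantities in Ecker-Huisken: the tilted angle function $w:=\Phi(\nu)/\langle\nu,e_{n+1}\rangle$ on the graph $\Sigma=\{(x,u(x))\}$, which reduces to $v=\sqrt{1+|\nabla u|^2}$ when $\Phi(x)=|x|$, and the anisotropic second fundamental form $A_\Phi$ defined via the second variation of $\mathcal{A}_\Phi$. The natural elliptic operator here is the linearization $L_\Phi$ of \eqref{ELE} along $\Sigma$, which differs from the intrinsic Laplacian $\Delta_\Sigma$ by factors of size $\|\Phi-1\|_{C^2}$.

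Second, I would derive a Jacobi-type inequality for $w$ and a Simons-type inequality for $|A_\Phi|^2$ under $L_\Phi$, of the schematic form
\begin{equation*}
L_\Phi w \geq (1-C\delta)|A_\Phi|^2\, w - \frac{|\nabla_\Sigma w|^2}{w},\qquad L_\Phi |A_\Phi|^2 \geq -(2+C\delta)|A_\Phi|^4 + 2(1-C\delta)|\nabla_\Sigma A_\Phi|^2,
\end{equation*}
where $\delta:=\|\Phi-1\|_{C^3(\mathbb{S}^n)}$ and $C=C(n)$. Establishing these amounts to a careful computation tracking the extra terms produced when the round metric on $\mathbb{S}^n$ is replaced by $D^2\Phi|_{\mathbb{S}^n}$ in the ambient curvature identities, and verifying that all non-cancelable pieces carry a factor of $\delta$.

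Third, following Ecker-Huisken, I would combine these two inequalities with a weighted cutoff $\eta$ supported in $B_{2R}$ and run a Moser-De Giorgi iteration to bound $\sup_{\Sigma\cap B_R}|A_\Phi|^{2p}$ by $R^{-2}\sup_{\Sigma\cap B_{2R}}w^{2p}$, for $p$ in an interval $[1,p_0(n,\delta)]$ dictated by the anisotropic Simons constants. Feeding in the growth hypothesis $w=o(r^\varepsilon)$, and using that $\Sigma$ is a graph so its intrinsic area on $B_R$ is polynomial in $R$, the supremum is $o(R^{2\varepsilon-2+\eta(n,\delta)})$ with $\eta(n,0)=0$. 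This tends to zero as $R\to\infty$ whenever $\varepsilon<\varepsilon_0(n,\Phi):=1-\eta(n,\delta)/2$, and $\varepsilon_0\to 1$ as $\delta\to 0$, recovering Ecker-Huisken in the isotropic limit. Hence $|A_\Phi|\equiv 0$ on $\Sigma$, which together with the uniform ellipticity of $\Phi$ forces $\Sigma$ to be a hyperplane and $u$ to be affine.

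The main obstacle I expect is the second step, namely establishing the anisotropic Simons identity with explicit $\delta$-dependence. In the classical case many delicate Codazzi-type cancellations occur due to the round symmetry of the unit sphere; in the anisotropic setting these are replaced by expressions involving third derivatives of $\Phi-|\cdot|$, and one must verify that no uncontrolled lower-order contaminants appear that would derail the Moser iteration. The $C^3$-closeness hypothesis is precisely what is needed to make the residual terms absorbable, but organizing them into a clean inequality of the correct structure is the core technical hurdle.
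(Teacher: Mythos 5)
Your plan has the right global shape---anisotropic Simons inequality, a mean-value/Moser step on the graph, and a cutoff argument feeding in the gradient growth to force $|A_\Phi|\equiv 0$---and this matches the paper's skeleton. But there is a genuine gap in the central step, and it is exactly the point you flag as ``the core technical hurdle''; the schematic inequality you propose cannot actually hold under $C^3$-closeness.

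Concretely, you propose a \emph{pointwise} Simons-type inequality
\[
L_\Phi|A_\Phi|^2\geq -(2+C\delta)|A_\Phi|^4+2(1-C\delta)|\nabla_\Sigma A_\Phi|^2 .
\]
The anisotropic Simons identity (Winklmann, used in \eqref{Simons_identity}) reads
\[
\tfrac12\Delta_\Phi|S_\Phi|^2=|\bar\nabla S_\Phi|^2-\mathrm{tr}(B_\Phi S_\Phi^2)|S_\Phi|^2+\bar g^{kl}T_{ijkl}h^{ij},
\]
and the remainder $T_{ijkl}$ contains pieces of the form $\bar\nabla_i T^{(1)}_{jlk}$ and $\bar\nabla_k T^{(2)}_{ijl}$, where $T^{(1)},T^{(2)}$ are built from $E^k_{ij}=\bar\Gamma^k_{ij}-\Gamma^k_{ij}$ (which already carries $D^3\Phi$). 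Differentiating once more produces terms with $D^4\Phi$ and with $\bar\nabla h$, neither of which can be absorbed pointwise into $|S_\Phi|^4$ and $|\bar\nabla S_\Phi|^2$ when one only controls $\|\Phi-1\|_{C^3}$. A pointwise Simons inequality of your proposed form would in effect require $C^4$-closeness (the Chodosh--Li hypothesis) and hence a weaker theorem. The resolution in the paper is structural rather than pointwise: the dangerous terms are kept as a divergence $p\,\bar\nabla_i E^i_{(12)}$ with the \emph{undifferentiated} bound $|E_{(12)}|\leq\varepsilon(\Phi)|S_\Phi|^{p+1}v^q$ (Proposition \ref{gsi}), so one never needs to estimate $D^4\Phi$. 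These divergence terms are then integrated by parts against cutoffs in the two integral estimates (Propositions \ref{int1}, \ref{int2}) and enter the mean-value inequality (Corollary \ref{aMVP}) as an extra $L^{2m}$-term, which is what makes the argument close under only $C^3$-closeness. Your proposal needs this decomposition; without it the iteration does not start.

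A secondary omission: you invoke a Moser--De Giorgi iteration on $\Sigma$ without specifying the Sobolev inequality driving it. In the anisotropic setting there is no monotonicity formula, so this is not free; the paper notes that anisotropic minimal graphs are area-minimizing by calibration (hence stable) and then uses the Sobolev inequality for stable anisotropic minimal hypersurfaces from \cite{CL1}. You should make this explicit. Finally, your choice $w=\Phi(\nu)/\langle\nu,e_{n+1}\rangle$ is a harmless variant of the paper's $v=(\nu^{n+1})^{-1}$, but the sign in your proposed Jacobi-type inequality for $w$ looks off (the gradient term should come in with a $+$, cf.\ \eqref{JEA}); this is cosmetic and does not affect the argument once the Simons step is repaired.
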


\begin{rem}
    Compared with the work of Chodosh-Li \cite[Corollary 1.5]{CL1}, Winklmann's work \cite{W, W2}, and the work of Schoen-Simon-Yau \cite{SSY}, our result holds for  {entire smooth anisotropic minimal graphs in all dimensions, and only requires $C^{3}$-closeness between the anisotropic integrand $\Phi$ and the classical area functional integrand}. Compared with the $C^{3}$-closeness version result of \cite{Si2} for $n  \leq 7$ or the $C^{2,\alpha}$-closeness version result of \cite[Remark 1.6]{CL1} for $n=3$, {by tracing the proof of Theorem \ref{Main Theorem}} our approach actually allows for a quantitative estimation of the $C^{3}$-closeness between the anisotropic area integrand and the classical area integrand. Our result also extends the work of Ecker and Huisken \cite{EH} to the context of anisotropic minimal hypersurfaces, and it recovers their result by setting $\varepsilon=1$ when $\Phi|_{\mathbb{S}^{n}}=1$. Additionally, it expands upon the work of Simon \cite{Si2} to encompass all dimensions under the gradient growth condition \eqref{gradient_growth}.
\end{rem}
The paper is organized as follows. In Section 2 we explain some notations and recall some well-known equations for anisotropic minimal hypersurfaces which will be used for the remaining of the paper. In Section 3, we will prove an almost mean value inequality on anisotropic minimal hypersurfaces via a generalized version of Simons' inequality with divergence form of nonlinear perturbation terms. In Section 4, we show two  integral estimates appearing in the above almost mean value inequality.  In Section 5, we will use the above ingredients to finish proving our main result Theorem \ref{Main Theorem}.

\section*{Acknowledgments}
W. Du appreciates the support from the NSERC Discovery Grant RGPIN-2019-06912 of Prof. Y. Liokumovich at the University of Toronto. Y. Yang gratefully acknowledges the support of the Johns Hopkins University Provost’s Postdoctoral Fellowship Program. Y. Yang would also like to thank Prof. C. Mooney at the University of California, Irvine for bringing this problem to his attention and for many inspiring and helpful discussions.

We are very grateful to the referee for the helpful comments regarding presentation, and for bringing our attention to the question discussed in Remark \ref{nonlinear}.

\section{Preliminaries}

In this section, we introduce some notations and formulae to be used in subsequent sections.  Let  
$g$ be the metric on smooth hypersurface $\Sigma\subset \mathbb{R}^{n+1}$ induced from $\mathbb{R}^{n+1}$ and $\nabla^2\Phi$ be the Hessian of $\Phi$ over sphere $\mathbb{S}^{n}$. Then,  let $A_{\Phi}=D^2\Phi(\nu)$ be identified as a positive definite  symmetric $(1, 1)$ tensor  defined by 
\begin{equation}
\begin{split}
    g(A_{\Phi}X, Y)&=\nabla^2\Phi(\nu)(d\nu(X), d\nu(Y)) \\
    &=D^2\Phi(\nu)(X, Y)=g(D^2\Phi(\nu)X, Y),\quad X, Y\in T\Sigma,
    \end{split}
\end{equation} 
and $B_{\Phi}=A^{-1}_{\Phi}$, where $\nu$ is a unit normal vector field along $\Sigma$ and the Weingarten map identifies the tangent space of a point on hypersurface  and the tangent space of Gauss map  image point on sphere. We also denote the smallest eigenvalue of $A_{\Phi}$ by $\lambda_{\Phi}>0$,  and from \cite[Lemma 2.1]{W} we have 
\begin{equation}
\lambda_{\Phi} \rightarrow 1 
\end{equation}
as $\lVert \Phi - 1\rVert_{C^2(\mathbb{S}^n)} \rightarrow 0.$ \\
We define the $\Phi$-anisotropic metric on $\Sigma$ by 
\begin{equation}
    (X, Y)_{\Phi}:= \bar{g}_{\Phi}(X, Y)= g(X, B_{\Phi}Y),
\end{equation}
whose local representation is
\begin{equation}
    \bar{g}_{ij}=B_{i}^{\ell}g_{\ell j},
\end{equation}
and $B_{i}^{\ell}$ is the local representation of $B_{\Phi}$.  

The $\Phi$-anisotropic gradient of a function $\phi$ defined on $\Sigma$ is given by
\begin{equation}
    \bar{\nabla}\phi= D^2\Phi(v)\nabla \phi ,
\end{equation}
and 
\begin{equation}
    |\bar{\nabla} \phi|^2= (\bar{\nabla} \phi, \bar{\nabla} \phi)_{\Phi} =g(D^2\Phi(v)\nabla \phi, \nabla\phi).
\end{equation}
More generally, for tensor field $T$ on $\Sigma$ whose local coordinates representation is $T_{j_1...j_s}^{i_1...i_r}$, we can define the $\Phi$-covariant derivatives $\bar{\nabla}$ with respect to the Levi-Civita connection associated with metric $g_{\Phi}$ (or equivalently Christoffel symbol $\bar{\Gamma}_{ij}^k$ related to third order derivative of $\Phi$).
The associated $\Phi$-anisotropic norm $|\cdot|$ is given by
\begin{equation}
    |T|^2 = \bar{g}_{i_1k_1}...\bar{g}_{i_rk_r}\bar{g}^{j_1l_1}...\bar{g}^{j_sl_s}T_{j_1...j_s}^{i_1...i_r}T_{l_1...l_s}^{k_1...k_r}.
\end{equation}

Let $S=-d\nu$ be the shape operator on $\Sigma\subset \mathbb{R}^{n+1}$ with $S^{j}_{i}=g^{ik}h_{kj}$ and $h_{kj}$ being the second fundamental form of $(\Sigma, g)$. The first variation formula of $\mathcal{A}_{\Phi}$ in \cite[A.1]{CL1} shows that $\Sigma$ is $\mathcal{A}_{\Phi}$ minimal hypersurface if and only if the anisotropic mean curvature
\begin{equation}\label{vanish_Hphi}
    H_{\Phi}=tr(S_{\Phi})=tr_{\Sigma}(D^{2}\Phi(\nu) S)=0,
\end{equation}
where $S_{\Phi}=D^{2}\Phi(\nu) S$ is the $\Phi$-anisotropic shape operator on $\Sigma$ with $\bar{S}^{j}_{i}=\bar{g}^{ik}h_{kj}$.
The second variation formula in \cite[A.2]{CL1} gives the anisotropic Jacobi operator of $\mathcal{A}_{\Phi}$
\begin{equation}
    L_{\Phi} = \Delta_{\Phi} + |S_{\Phi}|^2,
\end{equation}
where $\Delta_{\Phi}$ is given by
\begin{equation}
\Delta_{\Phi} \phi =\text{div}(\bar{\nabla}\phi)= \text{div}\left(D^2\Phi(\nu) \nabla\phi\right),
\end{equation}
and
\begin{equation}
    |S_{\Phi}|^2=\textrm{tr}\left(D^2\Phi(\nu) \cdot S^2\right).
\end{equation}


\section{Generalized Simons' inequality}

In this section, we prove a generalized Simons' inequality and an almost mean value inequality for anisotropic minimal graphs which only require the $C^{3}$-closeness between the anisotropic area functional integrand $\Phi$ and classical area functional integrand. 
\begin{prop}[Generalized Simons' inequality]\label{gsi}
There is constant $\delta_{0}(n)>0$  such that for $\|\Phi-1\|_{C^{3}(\mathbb{S}^{n})}<\delta_{0}(n)$ and for $p>0$ large enough (depending on $n$) and $q$ satisfying
\begin{equation}
     \frac{n}{n+1}q\leq p-\frac{n}{n+1},\quad \quad q\geq G(n, \Phi)p
\end{equation}
 we have
\begin{equation}\label{subsolution_Svpq}
\Delta_{\Phi}(|S_{\Phi}|^pv^q) \geq\, \left(q-Gp\right)|S_{\Phi}|^{p+2}v^q+p\bar{\nabla}_{i}{E}^{i}_{(12)},
\end{equation}
where $v= \sqrt{1+|\nabla u|^2}, \,  G=G(n, \Phi)>0$ is a constant depending on  $n$ and integrand $\Phi$ and converging to $1$ as $\|\Phi-1\|_{C^{3}(\mathbb{S}^{n})}\to 0$, and the vector field ${E}_{(12)}$ satisfies
\begin{equation}\label{E12prop}
    |{E}_{(12)}|\leq \varepsilon(\Phi)|S_{\Phi}|^{p+1}v^{q}
\end{equation}
with $0<\varepsilon(\Phi)\ll 1$  converging to $0$ as $\|\Phi-1\|_{C^{3}(\mathbb{S}^{n})}\to 0$.
\end{prop}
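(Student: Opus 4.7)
The plan is to derive the inequality from an anisotropic Simons-type identity for the shape operator $S_{\Phi}$, combined with the Jacobi equation for the vertical translation field $1/v$. The essential new feature compared with the isotropic case is that $\bar{\nabla}$ does not commute cleanly with ambient differentiation, so a genuine Simons computation produces additional zeroth-order contributions proportional to $D^{3}\Phi(\nu)$. The strategy is to isolate these contributions and exhibit them as the divergence of a vector field whose pointwise norm is controlled by $\|\Phi-1\|_{C^{3}(\mathbb{S}^{n})}$; this is what allows one to work with $C^{3}$-closeness only.

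First I would compute $\Delta_{\Phi}S_{\Phi}$ using the anisotropic Codazzi equation (which picks up $D^{3}\Phi$ corrections to the classical symmetry $\nabla_{i}h_{jk}=\nabla_{j}h_{ik}$), the commutation formula for $\bar{\nabla}$, and the vanishing $H_{\Phi}=0$. Feeding the result into the Bochner identity
\[
\Delta_{\Phi}|S_{\Phi}|^{2}=2|\bar{\nabla}S_{\Phi}|^{2}+2\,\bar{g}_{\Phi}(S_{\Phi},\Delta_{\Phi}S_{\Phi})
\]
would then give a bound of the form
\[
\Delta_{\Phi}|S_{\Phi}|^{2}\;\geq\;2|\bar{\nabla}S_{\Phi}|^{2}-2(1+\varepsilon(\Phi))|S_{\Phi}|^{4}+2\bar{\nabla}_{i}E^{i}_{(1)},
\]
with $|E_{(1)}|\leq\varepsilon(\Phi)|S_{\Phi}|^{3}$. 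The delicate step is to recognise that the potentially dangerous $D^{3}\Phi$ contractions, which look pointwise like $\varepsilon(\Phi)|S_{\Phi}|^{4}$, can be rewritten as an exact divergence after one integration by parts transfers a derivative of $\nu$ onto $\Phi$.

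Next I would derive the $v$-equation from the fact that the normal component $1/v$ of the ambient vertical vector field is a Jacobi field, $L_{\Phi}(1/v)=0$. Rearranging yields $\Delta_{\Phi}v\geq|S_{\Phi}|^{2}v+2|\bar{\nabla}v|^{2}/v+\bar{\nabla}_{i}E^{i}_{(2)}$ modulo divergence-form $D^{3}\Phi$ errors. Then I would assemble
\[
\Delta_{\Phi}(|S_{\Phi}|^{p}v^{q})=v^{q}\Delta_{\Phi}|S_{\Phi}|^{p}+|S_{\Phi}|^{p}\Delta_{\Phi}v^{q}+2\,\bar{g}_{\Phi}(\bar{\nabla}|S_{\Phi}|^{p},\bar{\nabla}v^{q}),
\]
substitute the previous two estimates, and apply a refined Kato-type inequality of the form $|\bar{\nabla}S_{\Phi}|^{2}\geq(1+\tfrac{2}{n}-\varepsilon(\Phi))|\bar{\nabla}|S_{\Phi}||^{2}$ — the mechanism that forces the constraint $\tfrac{n}{n+1}q\leq p-\tfrac{n}{n+1}$ — to absorb the sign-indefinite cross term into the gradient contributions. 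The surviving coefficient of $|S_{\Phi}|^{p+2}v^{q}$ is $(q-Gp)$ with $G=G(n,\Phi)\to 1$ as $\|\Phi-1\|_{C^{3}(\mathbb{S}^{n})}\to 0$, and the divergences from the two steps combine into $p\,\bar{\nabla}_{i}E^{i}_{(12)}$ satisfying \eqref{E12prop}.

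The main obstacle I expect is producing the divergence structure for the $D^{3}\Phi$ errors. Pointwise these errors have schematic form $D^{3}\Phi(\nu)\star\bar{\nabla}\nu\star\bar{\nabla}\nu\star S$ and scale like $\varepsilon(\Phi)|S_{\Phi}|^{4}$, which would be fatal: they are sign-indefinite and of the same order as the good Simons term $-2|S_{\Phi}|^{4}$, so no pointwise smallness of $\varepsilon(\Phi)$ would let them be buried in the quadratic term $(q-Gp)|S_{\Phi}|^{p+2}v^{q}$ on the right. The only escape is to extract from each such error term one factor of $\bar{\nabla}\nu$ that can be integrated by parts against $\Phi$ (absorbing a $C^{3}$-norm), leaving a gradient remainder of order $\varepsilon(\Phi)|S_{\Phi}|^{3}$, which is exactly what \eqref{E12prop} asks for. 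Carrying out this bookkeeping and verifying that the extracted divergence is genuine (not merely formal) is where the bulk of the technical work should go, and is precisely the mechanism by which $C^{3}$-closeness, rather than $C^{4}$, suffices.
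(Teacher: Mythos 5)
Your high-level strategy tracks the paper's: Jacobi equation for $v$, anisotropic Simons identity for $|S_{\Phi}|^{2}$, a refined Kato inequality to absorb the cross term, and a divergence-form treatment of the $D^{3}\Phi$ corrections. However, there are two substantive errors in the reasoning that would need to be fixed before the argument could be carried out.

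First, and most importantly, your explanation for \emph{why} the divergence structure is needed is wrong. You claim that a pointwise error of size $\varepsilon(\Phi)|S_{\Phi}|^{4}$ ``would be fatal'' because it is ``of the same order as the good Simons term $-2|S_{\Phi}|^{4}$'' and ``no pointwise smallness of $\varepsilon(\Phi)$ would let them be buried.'' This is incorrect: a pointwise error $\varepsilon(\Phi)|S_{\Phi}|^{4}$, when multiplied by $p|S_{\Phi}|^{p-2}v^{q}$, contributes $p\varepsilon(\Phi)|S_{\Phi}|^{p+2}v^{q}$ and is readily absorbed by shifting the constant $G$ by $O(\varepsilon(\Phi))$ — which is exactly what the actual proof does with the Kato correction $C(\theta)\varepsilon(\Phi)|S_{\Phi}|^{4}$ and with the part ($T^{(3)}$ in the paper's notation) of the error that \emph{is} controlled pointwise. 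The genuine reason one must keep the $T^{(1)},T^{(2)}$ pieces in divergence form is different: expanding $\bar{\nabla}_{i}T^{(1)}_{jlk}$ would require pointwise control of $\bar{\nabla}E$ and $\bar{\nabla}C$ where $E^{k}_{ij}=\bar{\Gamma}^{k}_{ij}-\Gamma^{k}_{ij}$ and $C$ encode third derivatives of $\Phi$; differentiating them would bring in $D^{4}\Phi(\nu)$, which cannot be bounded under $C^{3}$-closeness alone. Keeping the divergence and integrating by parts only at the level of the Moser/Caccioppoli estimate is precisely how one avoids ever touching $D^{4}\Phi$.

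Second, the mechanism by which the divergence structure arises is not an integration by parts ``transferring a derivative of $\nu$ onto $\Phi$'' carried out at the pointwise level. In the actual argument (following Winklmann's Simons identity), the fourth-order tensor $T_{ijkl}$ in the anisotropic Simons identity already splits algebraically as $\bar{\nabla}_{i}T^{(1)}_{jlk}+\bar{\nabla}_{k}T^{(2)}_{ijl}+T^{(3)}_{ijkl}$, where $T^{(1)},T^{(2)}$ are built out of $C$, $E$, and $h$ (hence controlled pointwise by $\varepsilon(\Phi)|S_{\Phi}|^{2}$), and $T^{(3)}$ is the explicit remainder controlled pointwise by $\varepsilon(\Phi)^{2}|S_{\Phi}|^{3}+\varepsilon(\Phi)|S_{\Phi}||\bar{\nabla}S_{\Phi}|$. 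One then multiplies by $|S_{\Phi}|^{p-2}v^{q}$ and commutes the weight inside the divergence, so that the derivatives hitting the weight land in the absorbable pointwise error $E_{(3)}$, and what survives in divergence form is exactly $E^{i}_{(12)}=|S_{\Phi}|^{p-2}v^{q}\bigl(\bar{g}^{kl}T^{(1)}_{jlk}h^{ij}+\bar{g}^{il}T^{(2)}_{ijl}h^{kj}\bigr)$ with $|E_{(12)}|\leq\varepsilon(\Phi)|S_{\Phi}|^{p+1}v^{q}$. No integration by parts occurs in the derivation of the pointwise differential inequality. A smaller issue: the Jacobi equation for $v$ used here is exact, $\Delta_{\Phi}v=|S_{\Phi}|^{2}v+2v^{-1}|\bar{\nabla}v|^{2}$, and carries no additional divergence error $\bar{\nabla}_{i}E^{i}_{(2)}$; introducing one is unnecessary and would only complicate the bookkeeping.
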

\begin{proof}
We first notice that  the $(n+1)$-th component $\nu^{n+1}$ of the unit normal $\nu$ to $\Sigma$ satisfies the following Jacobi equation 
\begin{equation}
\Delta_{\Phi}\nu^{n+1} + |S_{\Phi}|^2\nu^{n+1} = 0.
\end{equation}
Plugging $v = \left(\nu^{n+1}\right)^{-1} = \sqrt{1+|\nabla u|^2} $ into the above equation, we arrive at
\begin{equation}\label{JEA}
    \Delta_{\Phi} v = |S_{\Phi}|^2v + 2 v^{-1}|\bar{\nabla} v|^2.
\end{equation}
Then we recall the generalized Simons' identity for anisotropic minimal hypersurfaces from \cite[(41)]{W}
\begin{align}\label{Simons_identity}
    \frac{1}{2}\Delta_{\Phi}|S_{\Phi}|^2&=|\bar{\nabla} S_{\Phi}|^2-tr(B_{\Phi}S_{\Phi}^2)|S_{\Phi}|^2+\bar{g}^{kl}T_{ijkl}h^{ij},
\end{align}
where  after some computation 
\begin{align}
T_{ijkl}&=\bar{\nabla}_{i}T^{(1)}_{jlk}+\bar{\nabla}_{k}T^{(2)}_{ijl}+T^{(3)}_{ijkl}\,,
\end{align}
\begin{align}
T^{(1)}_{jlk}=C_{jlk}+E_{kj}^sh_{sl}+E_{kl}^sh_{js}\,,
\end{align}
\begin{align}
T^{(2)}_{ijl}
    &=C_{ijl}-E_{ij}^sh_{sl}-E_{il}^sh_{js}\,,
\end{align}
\begin{align}
   T^{(3)}_{ijkl}= &-(E_{kj}^mE_{mi}^s - E_{ij}^mE_{mk}^s )h_{sl}-(E_{kl}^mE_{mi}^s - E_{il}^mE_{mk}^s)h_{js}\\\nonumber
&+E_{ij}^s\bar{\nabla}_{k}h_{sl}- E_{kj}^s\bar{\nabla}_{i}h_{sl}+E_{il}^s\bar{\nabla}_{k}h_{js} -E_{kl}^s \bar{\nabla}_{i}h_{js}\,,
\end{align}
and the tensors $C_{jki}=E^{l}_{jk}h_{li}-E^{l}_{ij}h_{lj}$ and $E^{k}_{ij}=\bar{\Gamma}^{k}_{ij}-\Gamma^{k}_{ij}$ depend on only third order derivatives of $\Phi$ and  by \cite[Thm 3.4, Cor 3.5]{W} they satisfy
\begin{align}
   |E|\leq \varepsilon(\Phi)|S_{\Phi}|\,, \qquad |C|\leq \varepsilon(\Phi)|S_{\Phi}|^2,
\end{align}
where
\begin{equation}\label{0<e<1}
    0<\varepsilon(\Phi)\ll 1
\end{equation}
represents a constant depending on $\Phi$ and it may vary from text to text but converges to $0$ as $\|\Phi-1\|_{C^3(\mathbb{S}^{n})}\to 0$. Hence, the above equations and estimates imply that
\begin{equation}\label{T12}
    |T^{(1)}|+|T^{(2)}|\leq \varepsilon(\Phi)|S_{\Phi}|^2,
\end{equation}
and
\begin{equation}\label{T3}
    |T^{(3)}|\leq \varepsilon(\Phi)^2|S_{\Phi}|^3+\varepsilon(\Phi)|S_{\Phi}||\bar{\nabla}S_{\Phi}|.
\end{equation}
Then by \eqref{JEA}, \eqref{Simons_identity}, and noticing that $\text{tr}(B_{\Phi}S_{\Phi}^2) \leq \lambda_{\Phi}^{-1}|S_{\Phi}|^2$, where $\lambda_{\Phi}>0$ is the smallest eigenvalue of $A_{\Phi}$,  we compute 
\begin{equation}\label{disaster}
\begin{split}
  \Delta_{\Phi}&(|S_{\Phi}|^pv^q) \geq\, \left(q-\lambda_{\Phi}^{-1}p\right)|S_{\Phi}|^{p+2}v^q+ p\left(p-2 \right)|S_{\Phi}|^{p-2}v^q |\bar{\nabla} |S_{\Phi}||^2 \\
  & +q(q+1)|S_{\Phi}|^p v^{q-2}|\bar{\nabla} v|^2 +2pq|S_{\Phi}|^{p-1}v^{q-1}(\bar{\nabla} |S_{\Phi}|, \bar{\nabla} v)_{\Phi}   \\  
  &+ p|S_{\Phi}|^{p-2}v^q |\bar{\nabla} S_{\Phi}|^2  + p|S_{\Phi}|^{p-2}v^q\bar{g}^{kl}T_{ijkl}h^{ij}.
\end{split}
\end{equation}
After some computation, we have
\begin{align}\label{E123}
    &\quad|S_{\Phi}|^{p-2}v^q\bar{g}^{kl}T_{ijkl}h^{ij}=\bar{\nabla}_{i}E^{i}_{(12)}+E_{(3)},
\end{align}
where
\begin{align}
{E}^{i}_{(12)}=|S_{\Phi}|^{p-2}v^q(\bar{g}^{kl}T^{(1)}_{jlk}h^{ij}+\bar{g}^{il}T^{(2)}_{ijl}h^{kj}),
\end{align}
\begin{align}
     E^{(3)}&=|S_{\Phi}|^{p-2}v^q\bar{g}^{kl}T^{(3)}_{ijkl}h^{ij}\\\nonumber
    &-(p-2)|S_{\Phi}|^{p-3}v^q\bar{g}^{kl}T^{(1)}_{jlk}h^{ij}\bar{\nabla}_{i}|S_{\Phi}|-q|S_{\Phi}|^{p-2}v^{q-1}\bar{g}^{kl}T^{(1)}_{jlk}h^{ij}\bar{\nabla}_{i}v \\\nonumber
    &-(p-2)|S_{\Phi}|^{p-3}v^q\bar{g}^{kl}T^{(2)}_{ijl}h^{ij}\bar{\nabla}_{k}|S_{\Phi}|-q|S_{\Phi}|^{p-2}v^{q-1}\bar{g}^{kl}T^{(2)}_{ijl}h^{ij}\bar{\nabla}_{k}v  \\ \nonumber
    & -|S_{\Phi}|^{p-2}v^q\bar{g}^{kl}T^{(1)}_{jlk}\bar{\nabla}_{i}h^{ij}-|S_{\Phi}|^{p-2}v^q\bar{g}^{kl}T^{(2)}_{ijl}\bar{\nabla}_{k}h^{ij}.
\end{align}
In particular, using \eqref{T12}, \eqref{T3}, uniform boundedness of $|\bar{g}^{-1}|$ for $\Phi$ close to classical area integrand, Young's inequality and recalling our convention on $\varepsilon(\Phi)$ after \eqref{0<e<1}, we have 
\begin{equation}\label{E12}
    |E_{(12)}|\leq \varepsilon(\Phi)|S_{\Phi}|^{p+1}v^{q},
\end{equation}
and 
\begin{align}\label{E3_}
|E^{(3)}|&\leq \varepsilon(\Phi)\bigl(\left(1+\varepsilon(\Phi)\right)|S_{\Phi}|^{p+2}v^{q}+|S_{\Phi}|^{p-2}v^{q}|\bar{\nabla}|S_{\Phi}||^2+ |S_{\Phi}|^{p}v^{q-2}|\bar{\nabla}v|^2 \nonumber\\
& +|S_{\Phi}|^{p-2}v^{q}|\bar{\nabla}S_{\Phi}|^2\bigr).
\end{align}

Now we plug \eqref{E123} into \eqref{disaster}, and by \eqref{E3_} we obtain
\begin{equation}\label{intermediate step}
\begin{split}
  \Delta_{\Phi}&(|S_{\Phi}|^pv^q) \geq\, \left(q-\left(\lambda_{\Phi}^{-1}+2\varepsilon(\Phi)\right)p\right)|S_{\Phi}|^{p+2}v^q+ p(1-\varepsilon(\Phi))|S_{\Phi}|^{p-2}v^q |\bar{\nabla} S_{\Phi}|^2 \\
  & +q\left(q+1-\frac{p}{q}\varepsilon(\Phi)\right)|S_{\Phi}|^p v^{q-2}|\bar{\nabla} v|^2 +2pq|S_{\Phi}|^{p-1}v^{q-1}(\bar{\nabla} |S_{\Phi}|, \bar{\nabla} v)_{\Phi}   \\  
  &+ p\left(p-2-\varepsilon (\Phi)\right)|S_{\Phi}|^{p-2}v^q |\bar{\nabla} |S_{\Phi}||^2 +p\bar{\nabla}_{i}E_{(12)}^{i}.
\end{split}
\end{equation}
Then, we recall that \cite[Lemma 4.1]{W} gives the following estimate for all $\theta>0$ (note that \cite[Lemma 4.1]{W} only requires that $\Phi$ converges to classical area integrand in $C^{3}$-norm sense if we trace its proof) 
\begin{equation}\label{Lemma4.1}
    |\Bar{\nabla}S_{\Phi}|^2 \geq \frac{1+2/n}{1+\theta} |\Bar{\nabla}|S_{\Phi}||^2 -C(\theta)\varepsilon(\Phi)|S_{\Phi}|^4,
\end{equation}
 where $C(\theta)<\infty$  is a nonnegative constant depending only on $\theta>0$.\\
Therefore, by \eqref{intermediate step} and \eqref{Lemma4.1} we obtain
\begin{equation}\label{disaster1}
\begin{split}
  \Delta_{\Phi}&(|S_{\Phi}|^pv^q) \geq\, \left(q-\left(\lambda_{\Phi}^{-1}+2\varepsilon(\Phi) +(1-\varepsilon(\Phi))C(\theta)\varepsilon(\Phi)\right)p\right)|S_{\Phi}|^{p+2}v^q   \\
  &+ p\left(p + (1-\varepsilon(\Phi))\left(\frac{1+2/n}{1+\theta}\right)-2-\varepsilon(\Phi) \right)|S_{\Phi}|^{p-2}v^q |\bar{\nabla} |S_{\Phi}||^2 \\
  & +q\left(q+1-\frac{p}{q}\varepsilon(\Phi)\right)|S_{\Phi}|^p v^{q-2}|\bar{\nabla} v|^2 +2pq|S_{\Phi}|^{p-1}v^{q-1}(\bar{\nabla} |S_{\Phi}|,\bar{\nabla} v)_{\Phi}   \\  
&+p\bar{\nabla}_{i}E_{(12)}^{i}.
     \end{split}
\end{equation}
By Young's inequality  for each $\alpha > 0$ we have
\begin{equation}\label{Young}
    2pq|S_{\Phi}|v (\bar{\nabla} |S_{\Phi}|, \bar{\nabla} v)_{\Phi} \geq -pq\left(\alpha v^2|\bar{\nabla} |S_{\Phi}||^2 + \alpha^{-1}|S_{\Phi}|^2|\bar{\nabla} v|^2\right).
\end{equation}
Combining  \eqref{disaster1} with \eqref{Young}, we obtain
\begin{equation}\label{disaster2}
\begin{split}
  &\quad\Delta_{\Phi}(|S_{\Phi}|^pv^q)\\
  &\geq\, \left(q-\left(\lambda_{\Phi}^{-1}+2\varepsilon(\Phi) +(1-\varepsilon(\Phi))C(\theta)\varepsilon(\Phi)\right)p\right)|S_{\Phi}|^{p+2}v^q+p\bar{\nabla}_{i}E_{(12)}^{i}   \\
  &+ p\left(p -\left(2- (1-\varepsilon(\Phi))\left(\frac{1+2/n}{1+\theta}\right)\right)-\varepsilon(\Phi) -\alpha q\right)|S_{\Phi}|^{p-2}v^q |\bar{\nabla} |S_{\Phi}||^2 \\
  & +q\left(q+1-\frac{p}{q}\varepsilon(\Phi) - \alpha^{-1}p\right)|S_{\Phi}|^p v^{q-2}|\bar{\nabla} v|^2 .
\end{split}
\end{equation}
Now, we fix $\theta=\frac{1}{n}$ and let
\begin{equation}\label{G}
G(n, \Phi)=\left(\lambda_{\Phi}^{-1}+2\varepsilon(\Phi) +(1-\varepsilon(\Phi))C(\theta)\varepsilon(\Phi)\right),
\end{equation} 
so the first term of the right hand side of \eqref{disaster2} is non-negative if and only if 
\begin{equation}\label{C1}
     q\geq G(n, \Phi)p.
\end{equation}
We assert that for $p>2$ large enough, we can find $\alpha>0$ such that
\begin{equation}
    p-\left(2- \frac{1+2/n}{1+\theta}\right)-\alpha q > 0,
\end{equation}
and  
\begin{equation}
    q+1-\alpha^{-1}p> 0,
\end{equation}
so that the last two lines of the right hand side of \eqref{disaster2} would be non-negative. \\
\\
Indeed, when $\varepsilon(\Phi)>0$ is sufficiently small, such $\alpha$ exists if and only if
\begin{equation}\label{interval}
    \frac{p}{q+1} < \frac{p-\left(2- \frac{1+2/n}{1+\theta}\right)}{q},
\end{equation}
or equivalently 
\begin{equation}\label{interval3}
\alpha\in \left(\frac{p}{q+1},\, \frac{p-\left(2-\frac{1+2/n}{1+\theta}\right)}{q}\right).
\end{equation}
Combining the constraints \eqref{C1} and \eqref{interval},  we need
\begin{equation}\label{alpha_inequality}
    G(n, \Phi)\left(2- \frac{1+2/n}{1+\theta}\right)p\leq  \left(2- \frac{1+2/n}{1+\theta}\right)q < p-\left(2- \frac{1+2/n}{1+\theta}\right).
\end{equation}
Recalling $\theta=\frac{1}{n}\in (0, \frac{2}{n})$ and noticing $G(n, \Phi)$ converges to $1$  as $\lVert \Phi -1 \rVert_{C^3(\mathbb{S}^{n})} \rightarrow 0$, we have 
\begin{equation}\label{condition}
    G(n, \Phi)\left(2- \frac{1+2/n}{1+\theta}\right) <1.
\end{equation}   
This guarantees \eqref{alpha_inequality} holds, so there exists a required $\alpha>0$ for $p$ large enough and $q$ satisfying
\begin{equation}
     \left(2- \frac{1+2/n}{1+\theta}\right)q < p-\left(2- \frac{1+2/n}{1+\theta}\right),\quad q\geq G(n, \Phi)p.
\end{equation}
This  together with recalling  $\theta=\frac{1}{n}$ and discarding the last two nonnegative terms in \eqref{disaster2} implies \eqref{subsolution_Svpq}. Noticing \eqref{E12} holds, we
 complete the proof of Proposition \ref{gsi}.\\
\end{proof}

\begin{cor}[almost mean value inequality]\label{aMVP}
    For $m>\frac{n}{2}$,  $p$ large enough and $\|\Phi-1\|_{C^{3}(\mathbb{S}^{n})}$ small enough (depending on $n$) and  $R>0$, we have 
\begin{equation}\label{MVP_equality}
        \begin{split}
        |S_{\Phi}|^pv^{Gp}(0)&\leq c(n, m, p,\Phi) R^{-\frac{n}{2}} \left(\int_{\Sigma \cap B_{R}} |S_{\Phi}|^{2p}v^{2Gp} d\mu_{\Sigma}\right )^{1/2}\\
    &+c(n, m, p, \Phi)\varepsilon(\Phi)R^{1-\frac{n}{2m}}\left(\int_{\Sigma \cap B_{R}} |S_{\Phi}|^{2m(p+1)}v^{2mGp} d\mu_{\Sigma}\right )^{1/2m},
        \end{split}
\end{equation}
 where $G=G(n, \Phi)$ is the constant defined in \eqref{G}, and $\varepsilon(\Phi)>0$ is a constant converging to $0$ as $\|\Phi-1\|_{C^{3}(\mathbb{S}^{n})}\to 0$.
\end{cor}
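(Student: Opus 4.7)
The plan is to turn $w := |S_\Phi|^p v^{Gp}$ into an almost-subsolution of the anisotropic Laplacian using Proposition \ref{gsi}, and then run a Moser iteration on $\Sigma$ that carefully tracks the divergence-form perturbation. First I would specialize Proposition \ref{gsi} to the borderline case $q = Gp$, so that the bulk term $(q-Gp)|S_\Phi|^{p+2}v^q$ vanishes and the inequality reduces to
\begin{equation*}
    \Delta_\Phi w \;\geq\; p\,\bar{\nabla}_i E^i_{(12)}, \qquad |E_{(12)}| \leq \varepsilon(\Phi)\,|S_\Phi|\,w.
\end{equation*}
Thus $w$ is a $\Delta_\Phi$-subsolution modulo a divergence whose coefficient is small, and the admissibility conditions on $p,q$ in Proposition \ref{gsi} hold automatically for $p$ large.

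Next I would derive a Caccioppoli-type estimate by pairing the inequality above with the test function $\phi = w^{\beta-1}\eta^2$ for $\beta \geq 2$ and a cutoff $\eta \in C_c^\infty(B_R)$. Integrating by parts on both $\Delta_\Phi w$ and $\bar{\nabla}_i E^i_{(12)}$, then using Young's inequality together with \eqref{E12prop} to absorb gradient cross terms into the left-hand side, one arrives at
\begin{equation*}
    \int_\Sigma \eta^2 w^{\beta-2}|\bar{\nabla} w|^2 \;\lesssim\; \int_\Sigma w^\beta|\bar{\nabla}\eta|^2 + \varepsilon(\Phi)^2 \int_\Sigma |S_\Phi|^2 w^\beta \eta^2.
\end{equation*}
Combining this with the Michael--Simon Sobolev inequality on $\Sigma$ applied to $\eta w^{\beta/2}$ (which is available since $\Phi$ is $C^3$-close to the area integrand, so the classical mean curvature is controlled by $|S_\Phi|$) yields a reverse H\"older inequality on powers of $w$. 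Standard Moser iteration along a geometric sequence of radii and exponents then produces
\begin{equation*}
    w(0) \;\leq\; C R^{-n/2}\|w\|_{L^2(\Sigma\cap B_R)} \;+\; C\,\varepsilon(\Phi)\cdot(\text{accumulated perturbation}),
\end{equation*}
where the first term is the classical $L^2$-to-$L^\infty$ Moser output and the second collects the $\varepsilon(\Phi)$-weighted $|S_\Phi|^2 w^\beta$ errors arising from the divergence perturbation. One final H\"older step, using $|\Sigma \cap B_R| \lesssim R^n$, converts this perturbation into the expression $\varepsilon(\Phi)\,R^{1-n/(2m)}\||S_\Phi|^{p+1}v^{Gp}\|_{L^{2m}(\Sigma\cap B_R)}$; the hypothesis $m>n/2$ is exactly what makes the integral $\int_{B_R}|x|^{-(n-1)\cdot 2m/(2m-1)}$ (equivalent to the dual exponent appearing when Hölder is applied against the fundamental solution scaling) converge, and it produces precisely the exponent $1-n/(2m)$.

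The principal obstacle will be handling the divergence-form perturbation $p\,\bar{\nabla}_i E^i_{(12)}$ consistently across all scales of the Moser iteration. At each step the natural quadratic error is of the form $\varepsilon(\Phi)^2\int |S_\Phi|^2 w^\beta \eta^2$, and this error must either be reabsorbed into the left-hand side (via Young's inequality and the smallness of $\varepsilon(\Phi)$) or carried faithfully down the iteration chain so that it reassembles cleanly into a single $L^{2m}$ norm of $|S_\Phi|^{p+1}v^{Gp}$ with the correct scaling factor $R^{1-n/(2m)}$. Keeping the $p$- and $\beta$-dependence of constants harmless and choosing the telescoping sequences of cutoffs and exponents so that the final perturbation norm matches the target is the main technical issue; the rest is standard bookkeeping.
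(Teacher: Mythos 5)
Your proposal follows essentially the same route as the paper: specialize Proposition~\ref{gsi} to $q = Gp$ so the bulk term drops out and $w = |S_\Phi|^p v^{Gp}$ satisfies $\Delta_\Phi w \geq p\,\bar\nabla_i E^i_{(12)}$, then run a Moser-type iteration against the minimal-hypersurface Sobolev inequality. The paper condenses the iteration into a citation of \cite[Thm 8.17]{GT}, which is tailor-made for subsolutions of the form $Lw \geq \mathrm{div}\,f$ with $f \in L^{2m}$, $2m>n$, and produces exactly the $R^{1-n/(2m)}\|f\|_{L^{2m}}$ scaling appearing as the second term of \eqref{MVP_equality}; your explicit iteration-plus-H\"older bookkeeping, including the computation that $m>n/2$ is precisely the convergence threshold, is a correct expansion of that citation.

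The one place you diverge is the choice of Sobolev inequality. You invoke Michael--Simon, which carries a mean-curvature term. Since $H_\Phi = 0$ gives only $|H| \leq \varepsilon(\Phi)|S_\Phi|$ rather than $H = 0$, Michael--Simon introduces an additional perturbation $\sim \varepsilon(\Phi)^2\int |S_\Phi|^2 f^2$ at each iteration step, on top of the divergence error from $E_{(12)}$; you gesture at this with ``the classical mean curvature is controlled by $|S_\Phi|$'' but do not say that it must be carried and absorbed in the same way as the $E_{(12)}$ error. The paper sidesteps the issue entirely: it observes that every entire anisotropic minimal graph sits inside a vertical foliation of such graphs, is therefore a calibrated $\mathcal{A}_\Phi$-minimizer, and hence stable, and then uses the Sobolev inequality for stable anisotropic minimal hypersurfaces from \cite[(3.7)]{CL1}, which comes with no mean-curvature correction. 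Both routes reach the same estimate; the paper's keeps the iteration cleaner because $E_{(12)}$ is the only error source, whereas yours produces two comparable error streams that you would need to track jointly. This is the only substantive gap --- fill in how the Michael--Simon $H$-term is absorbed alongside the divergence term, or swap in the calibration-plus-stability Sobolev inequality as the paper does, and your argument is complete.
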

\begin{proof}
Noticing that every anisotropic minimal graph is an element of anisotropic minimal graphical folitation  along last coordinate direction, by standard calibration argument they are always anisotropic area minimizers. Hence,  we have the following  Sobolev inequality for stable anisotropic minimal hypersurfaces from \cite[(3.7)]{CL1}:
    \begin{equation}
\left(\int_{\Sigma}|f|^{\frac{2n}{n-2}}\right)^{\frac{n-2}{n}}\leq C(n, \Phi)\int_{\Sigma}|\nabla f|^2,
    \end{equation}
Then, by Proposition \ref{gsi} for $q=Gp$ with $p$ large enough we have
\begin{equation}\label{subsolution_geqE}
\Delta_{\Phi}(|S_{\Phi}|^pv^{Gp}) \geq p\bar{\nabla}_{i}{E}^{i}_{(12)}.
\end{equation}
Noticing the uniform ellipticity and boundedness of anisotropic area functional $\Phi$ and nonlinear terms estimates \eqref{E12prop} for right hand side of \eqref{subsolution_geqE}, the desired almost mean value inequality \eqref{MVP_equality} follows from the above Sobolev inequality on stable anisotropic minimal hypersurfaces and standard Nash-Moser iteration (see \cite[Thm 8.17]{GT}).
\end{proof}


\section{Integral estimates}

In this section, we estimate the two integrals in the right hand side of  \eqref{MVP_equality} in Corollary \ref{aMVP}. 
\begin{prop}\label{int1}
For $p$ large enough and $\|\Phi-1\|_{C^{3}(\mathbb{S}^n)}$ small enough (depending on $n, p$)
    \begin{equation}
        \int_{\Sigma } |S_{\Phi}|^{2p}v^{2Gp} \eta^{2p} d\mu_{\Sigma}\leq C_1(n, p, \Phi)\int_{\Sigma} v^{2Gp}|\bar{\nabla}\eta|^{2p}d\mu_{\Sigma},
    \end{equation}
where $G=G(n, \Phi)$ is defined in \eqref{G}, $\eta$ is any test function with compact support and $C_{1}(n, p, \Phi)<\infty$ is a constant.
\end{prop}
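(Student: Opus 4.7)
The plan is to apply Proposition \ref{gsi} with exponents adjusted so that the bulk term becomes $|S_\Phi|^{2p} v^{2Gp}$, then derive the desired Caccioppoli-type estimate via integration by parts and two successive applications of Young's inequality.

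First, I apply Proposition \ref{gsi} with $(p, q) \mapsto (P, Q) = (2p-2, \, 2Gp)$. The admissibility conditions reduce to $2Gp \geq 2Gp - 2G$ (automatic) and $\tfrac{2nGp}{n+1} \leq 2p - 2 - \tfrac{n}{n+1}$, which holds for $p$ large using $G \cdot \tfrac{n}{n+1} < 1$ from \eqref{condition}. The bulk coefficient is then $Q - GP = 2G$. Crucially, I work from the fuller form \eqref{disaster2} in the proof of Proposition \ref{gsi} rather than its simplification \eqref{subsolution_Svpq}, preserving two nonnegative gradient terms on the right proportional to $|S_\Phi|^{2p-4} v^{2Gp} |\bar\nabla|S_\Phi||^2$ and $|S_\Phi|^{2p-2} v^{2Gp-2} |\bar\nabla v|^2$ with strictly positive coefficients of order $p$ (for $p$ large and $\varepsilon(\Phi)$ small, via the same $\alpha$-choice as in Proposition \ref{gsi}).

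Next, I multiply by $\eta^{2p}$ and integrate over $\Sigma$; a single integration by parts on both the $\Delta_\Phi$ term and the divergence $\bar\nabla_i E_{(12)}^i$ yields, after moving the Simons gradient terms to the left,
\begin{equation*}
2G \int \eta^{2p} |S_\Phi|^{2p} v^{2Gp} \, d\mu \leq -2p \int \eta^{2p-1} \bar\nabla \eta \cdot \bar\nabla(|S_\Phi|^{2p-2} v^{2Gp}) \, d\mu + 2p(2p-2) \int \eta^{2p-1} \bar\nabla \eta \cdot E_{(12)} \, d\mu.
\end{equation*}
Expanding $\bar\nabla(|S_\Phi|^{2p-2} v^{2Gp})$ by the product rule yields two pieces involving $|\bar\nabla |S_\Phi||$ and $|\bar\nabla v|$ respectively; to each I apply Young's inequality $2|ab| \leq \delta a^2 + \delta^{-1} b^2$ with $a$ chosen so that $a^2$ matches the form of a retained Simons gradient term and $\delta$ of order $1/p$, which allows the $a^2$-contributions to be absorbed into the Simons gradient terms. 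The leftover $b^2$-contributions combine to a single expression $C(p) \int \eta^{2p-2} |\bar\nabla \eta|^2 |S_\Phi|^{2p-2} v^{2Gp} \, d\mu$, while by \eqref{E12prop} the $E_{(12)}$ contribution is bounded by $C(p) \varepsilon(\Phi) \int \eta^{2p-1} |\bar\nabla \eta| |S_\Phi|^{2p-1} v^{2Gp} \, d\mu$.

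Finally, I apply Young's inequality a second time to both residuals: with H\"older conjugates $(p/(p-1), p)$ for the first, using the pairing $A = \eta^{2p-2} |S_\Phi|^{2p-2} v^{2G(p-1)}$ and $B = |\bar\nabla \eta|^2 v^{2G}$, giving $\kappa \int \eta^{2p} |S_\Phi|^{2p} v^{2Gp} + C(p, \kappa) \int |\bar\nabla \eta|^{2p} v^{2Gp}$; analogously conjugates $(2p/(2p-1), 2p)$ handle the $E_{(12)}$ residual. Choosing $\kappa$ sufficiently small (depending on $n, p$) and then $\|\Phi - 1\|_{C^3(\mathbb{S}^n)}$ small enough that $\varepsilon(\Phi)$ is correspondingly small, all $\int \eta^{2p} |S_\Phi|^{2p} v^{2Gp}$ contributions on the right absorb into the $2G$-term on the left, yielding the desired bound.

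The main obstacle is coordinating the successive absorption steps: the Young parameter $\delta$ in Step 2 must be taken of order $1/p$ so that $\delta \cdot (\text{coefficient from IbP})$ fits within the order-$p$ Simons gradient coefficients; this forces the leftover residual to have coefficient of order $p^3$, which in turn forces $\kappa$ to be polynomially small in $p$ and hence a correspondingly smaller allowed $\varepsilon(\Phi)$. All these parameters must be jointly compatible with the internal parameter $\alpha$ already fixed in the proof of Proposition \ref{gsi}, and tracking their dependence on $p$ determines the final constant $C_1(n, p, \Phi)$.
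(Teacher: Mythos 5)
Your proposal is correct and arrives at the same Caccioppoli-type estimate built on the generalized Simons inequality, but it routes the absorption differently from the paper. You apply \eqref{disaster2} directly with the doubled exponents $(P,Q)=(2p-2,2Gp)$, multiply only by $\eta^{2p}$, and then absorb the integration-by-parts cross terms into the retained Simons gradient terms (the analogues of $M,N$, which here must be strictly positive of order $p$), followed by Young's inequality with H\"older conjugates $(p/(p-1),p)$ to reach the $|\bar\nabla\eta|^{2p}v^{2Gp}$ form. The paper instead takes $(p-1,Gp)$ in \eqref{disaster2}, multiplies the resulting differential inequality by the \emph{extra} factor $f=|S_\Phi|^{p-1}v^{Gp}$ (times $\eta^{2p}$), and integrates; the integration by parts on $\int f\eta^{2p}\Delta_\Phi f$ then produces an explicit $-\int\eta^{2p}|\bar\nabla f|^2$ term which swallows the cross term by itself (with Young parameter $\beta\sim 1/p$), so the paper's $M,N$ only need to be positive, and are used solely to absorb the $O(\varepsilon(\Phi))$ contributions from the $\hat E_{(12)}$ integration by parts. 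Both arguments are valid; yours puts more weight on the quantitative positivity of $M',N'$ (hence the $\delta\sim 1/p$ and $\kappa\sim p^{-3}$ bookkeeping you flagged), whereas the paper's extra multiplication by $f$ isolates a $|\bar\nabla f|^2$ reservoir that makes the absorption slightly more transparent and less parameter-sensitive. Either way the final Young step and the resulting constant $C_1(n,p,\Phi)$ match.
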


\begin{proof}
In \eqref{disaster2}, we replace $p$ with $p-1$, and $q$ with $Gp$. Denoting $\left(2- \frac{1+2/n}{1+\theta}\right)$ by $B$ with $\theta=\frac{1}{n}$, then for every fixed $p > \max \left\{3, \frac{1+B}{1-G\times B}\right\}$, we have
\begin{equation}\label{key2}  
\begin{split}
\Delta_{\Phi}\left(|S_{\Phi}|^{p-1}v^{Gp}\right)&\geq G|S_{\Phi}|^{p+1}v^{Gp} +(p-1)\bar{\nabla}_{i}\hat{E}_{(12)}^{i}
 + M|S_{\Phi}|^{p-3}v^{Gp} |\bar{\nabla} |S_{\Phi}||^2 \\
  &+N|S_{\Phi}|^{p-1} v^{Gp-2}|\bar{\nabla} v|^2,
\end{split}
\end{equation}
where 
\begin{equation}
    M=(p-1)\left(p-1 -\left(2- (1-\varepsilon(\Phi))\left(\frac{1+2/n}{1+\theta}\right)\right)-\varepsilon(\Phi) -\alpha Gp\right),
\end{equation}
\begin{equation}
    N=Gp\left(Gp+1-\frac{p-1}{Gp}\varepsilon(\Phi) - \alpha^{-1}(p-1)\right),
\end{equation}
\begin{equation}
\hat{E}_{(12)}^{i} = |S_{\Phi}|^{p-3}v^{Gp}\left(\bar{g}^{kl}T^{(1)}_{jlk}h^{ij}+\bar{g}^{il}T^{(2)}_{ijl}h^{kj}\right).
\end{equation}
Let $\eta$ be a test function with compact support. Multiplying on both sides of \eqref{key2} by $|S_{\Phi}|^{p-1}v^{Gp}\eta^{2p}$ and taking integral, we obtain
\begin{equation}\label{key3}
\begin{split}
    \int_{\Sigma} |S_{\Phi}|^{2p}v^{2Gp}\eta^{2p} &\leq \frac{1}{G} \int_{\Sigma} |S_{\Phi}|^{p-1}v^{Gp}\eta^{2p}\Delta_{\Phi}\left(|S_{\Phi}|^{p-1}v^{Gp}\right) \\
    &-\frac{p-1}{G}\int_{\Sigma}|S_{\Phi}|^{p-1}v^{Gp}\eta^{2p}\bar{\nabla}_{i}\hat{E}_{(12)}^{i} \\
    &- \frac{M}{G}\int_{\Sigma} |S_{\Phi}|^{2(p-2)}v^{2Gp} \eta^{2p} |\bar{\nabla} |S_{\Phi}||^2\\ 
  &-\frac{N}{G}\int_{\Sigma} |S_{\Phi}|^{2(p-1)} v^{2(Gp-1)}\eta^{2p}|\bar{\nabla} v|^2.
\end{split}
\end{equation}
    Denoting $|S_{\Phi}|^{(p-1)}v^{Gp}$ by $f$, then we have
\begin{equation}\label{key4}
    \begin{split}
        \int_{\Sigma} f \eta^{2p}\Delta_{\Phi} f  &= -2p\int_{\Sigma} f\eta^{2p-1}(\bar{\nabla} f, \bar{\nabla} \eta)_{\Phi} 
         - \int_{\Sigma} |\bar{\nabla} f|^2 \eta^{2p} \\
         &\leq p \int_{\Sigma} \left(\beta^{-1} f^2\eta^{2(p-1)}|\Bar{\nabla}\eta|^2 + \beta |\bar{\nabla}f|^2\eta^{2p} \right)  - \int_{\Sigma}    |\bar{\nabla}f|^2\eta^{2p} \\
         &\leq c(p) \int_{\Sigma} f^2\eta^{2(p-1)}|\bar{\nabla} \eta|^2 \\
         & = c(p) \int_{\Sigma} |S_{\Phi}|^{2(p-1)}v^{2Gp}\eta^{2(p-1)}|\bar{\nabla} \eta|^2.
    \end{split}
\end{equation}
Combining (\ref{key3}) and (\ref{key4}), we obtain
\begin{equation}
    \begin{split}
    \int_{\Sigma} |S_{\Phi}|^{2p}v^{2Gp}\eta^{2p} &\leq \,\frac{c(p)}{G} \int_{\Sigma} |S_{\Phi}|^{2(p-1)}v^{2Gp}\eta^{2(p-1)}|\bar{\nabla} \eta|^2 \\
    &- \frac{p-1}{G}\int_{\Sigma}|S_{\Phi}|^{p-1}v^{Gp}\eta^{2p}\bar{\nabla}_{i}\hat{E}_{(12)}^{i} \\
  &- \frac{M}{G}\int_{\Sigma} |S_{\Phi}|^{2(p-2)}v^{2Gp} \eta^{2p} |\bar{\nabla} |S_{\Phi}||^2\\ 
  &-\frac{N}{G}\int_{\Sigma} |S_{\Phi}|^{2(p-1)} v^{2(Gp-1)}\eta^{2p}|\bar{\nabla} v|^2.
    \end{split}
\end{equation}
On the other hand, by \eqref{E12prop} we have 
\begin{equation}
|\hat{E}_{(12)}^i|\leq \varepsilon(\Phi)|S_{\Phi}|^{p}v^{Gp}.
\end{equation}
Using divergence theorem, we have
\begin{equation}
\begin{split}
&\quad -\frac{p-1}{G}\int_{\Sigma}|S_{\Phi}|^{p-1}v^{Gp}\eta^{2p}\bar{\nabla}_{i}\hat{E}_{(12)}^{i} \\
&= \frac{p-1}{G}\int_{\Sigma}\hat{E}_{(12)}^{i}\bar{\nabla}_{i}\left(|S_{\Phi}|^{p-1}v^{Gp}\eta^{2p}\right) \\
& \leq \varepsilon(\Phi)\frac{p-1}{G}\biggl(\int_{\Sigma} |S_{\Phi}|^{2p-2}v^{2Gp}\eta^{2p}|\bar{\nabla}_{i}|S_{\Phi}|| + \int_{\Sigma} |S_{\Phi}|^{2p-1}v^{2Gp-1}\eta^{2p}|\bar{\nabla}_{i}v| \\
&  + \int_{\Sigma} |S_{\Phi}|^{2p-1}v^{2Gp}\eta^{2p-1}|\bar{\nabla}_{i}\eta|\biggr)\\
& \leq \varepsilon(\Phi)\frac{p-1}{G} \biggl( 3\int_{\Sigma} |S_{\Phi}|^{2p}v^{2Gp}\eta^{2p} + \int_{\Sigma} |S_{\Phi}|^{2(p-1)}v^{2Gp}\eta^{2(p-1)}|\bar{\nabla} \eta|^2  \\
& +\int_{\Sigma} |S_{\Phi}|^{2(p-2)}v^{2Gp}\eta^{2p}|\bar{\nabla} |S_{\Phi}||^2 + \int_{\Sigma} |S_{\Phi}|^{2(p-1)}v^{2(Gp-1)}\eta^{2p}|\bar{\nabla} v|^2
\biggr).
\end{split}
\end{equation}
Combining above with $\varepsilon(\Phi)>0$ small enough (depending on $p$), we obtain
\begin{equation}
    \begin{split}
   & \quad \left(1-\frac{3(p-1)}{G}\varepsilon(\Phi)\right)\int_{\Sigma} |S_{\Phi}|^{2p}v^{2Gp}\eta^{2p} \\
   &\leq \left(\frac{c(p)+(p-1)\varepsilon(\Phi)}{G}\right) \int_{\Sigma} |S_{\Phi}|^{2(p-1)}v^{2Gp}\eta^{2(p-1)}|\bar{\nabla} \eta|^2 \\
  &+\left(\frac{(p-1)\varepsilon(\Phi)}{G}- \frac{M}{G}\right)\int_{\Sigma} |S_{\Phi}|^{2(p-2)}v^{2Gp} \eta^{2p} |\bar{\nabla} |S_{\Phi}||^2\\ 
  &+\left(\frac{(p-1)\varepsilon(\Phi)}{G}- \frac{N}{G}\right)\int_{\Sigma} |S_{\Phi}|^{2(p-1)} v^{2(Gp-1)}\eta^{2p}|\bar{\nabla} v|^2\\
  &\leq \left(\frac{c(p)}{G}+\varepsilon(\Phi)\right) \int_{\Sigma} |S_{\Phi}|^{2(p-1)}v^{2Gp}\eta^{2(p-1)}|\bar{\nabla} \eta|^2 .
    \end{split}
\end{equation}
The above inequality together with Young's inequality implies that
\begin{equation}
    \int_{\Sigma} |S_{\Phi}|^{2p}v^{2Gp}\eta^{2p}\leq C_1(n, p, \Phi)\int_{\Sigma} v^{2Gp}|\bar{\nabla}\eta|^{2p}.
\end{equation}
\end{proof}
Then, we estimate the following integral. \\
\begin{prop}\label{int2}
For $m>\frac{n}{2}$, $p$ large enough  and $\|\Phi-1\|_{C^{3}(\mathbb{S}^n)}$ small enough (depending on $m, n, p$), we have
    \begin{equation}
       \int_{\Sigma} |S_{\Phi}|^{2m(p+1)}v^{2mGp}\eta^{2m(p+1)} d\mu_{\Sigma}\leq C_2(n, p, \Phi)\int_{\Sigma} v^{2mGp}|\bar{\nabla}\eta|^{2m(p+1)}d\mu_{\Sigma},
    \end{equation}
    where $G=G(n, \Phi)$ is defined in \eqref{G}, $\eta$ is any test function with compact support and $C_{2} (n, p, \Phi)<\infty$ is a constant.
\end{prop}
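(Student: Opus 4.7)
The plan is to closely mimic the proof of Proposition~\ref{int1}, but to multiply the Simons-type inequality~(\ref{key2}) (obtained from~(\ref{disaster2}) with the same parameters $(p-1, Gp)$ used in Proposition~\ref{int1}) by an asymmetric test function engineered to produce the exponents $2m(p+1)$ on $|S_\Phi|$ and $\eta$ and $2mGp$ on $v$. Specifically, I would multiply~(\ref{key2}) by
\[
    \psi := |S_\Phi|^{(2m-1)(p+1)}\,v^{(2m-1)Gp}\,\eta^{2m(p+1)},
\]
so that the leading Simons term $G\int \psi\cdot|S_\Phi|^{p+1}v^{Gp}$ is exactly the target $G\int |S_\Phi|^{2m(p+1)}v^{2mGp}\eta^{2m(p+1)}$, since $(2m-1)(p+1)+(p+1)=2m(p+1)$ and $(2m-1)Gp+Gp=2mGp$.

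Next I integrate by parts on $\int\psi\,\Delta_\Phi f$ with $f:=|S_\Phi|^{p-1}v^{Gp}$, expanding $-\int \bar\nabla\psi\cdot\bar\nabla f$ into six terms obtained from pairing the three components of $\bar\nabla\psi$ with the two components of $\bar\nabla f$. Crucially, the two diagonal terms $|\bar\nabla|S_\Phi||^2$ and $|\bar\nabla v|^2$ produced by IBP have exactly the same integrand structure as the positive gradient terms $M(\cdots)|\bar\nabla|S_\Phi||^2$ and $N(\cdots)|\bar\nabla v|^2$ already present on the right-hand side of~(\ref{key2}), since the exponents match: $(2m-1)(p+1)+(p-3)=2m(p+1)-4$ and $(2m-1)(p+1)+(p-1)=2m(p+1)-2$. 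After rearrangement these diagonal contributions sit favorably on the left side of the inequality with combined positive coefficients. The two cross terms $(\bar\nabla|S_\Phi|,\bar\nabla v)_\Phi$ are absorbed into the diagonal ones by Young's inequality, and the two boundary terms involving $\bar\nabla\eta$ by a second Young's inequality, producing an $\int|S_\Phi|^{2m(p+1)-2}v^{2mGp}|\bar\nabla\eta|^2\eta^{2m(p+1)-2}$ contribution. The divergence error $-(p-1)\int\psi\,\bar\nabla_i\hat{E}_{(12)}^i$ is handled exactly as in Proposition~\ref{int1}: integrate by parts, use the pointwise bound $|\hat{E}_{(12)}^i|\le\varepsilon(\Phi)|S_\Phi|^pv^{Gp}$ following from~(\ref{E12prop}), and exploit the smallness of $\varepsilon(\Phi)$ enforced by the hypothesis on $\|\Phi-1\|_{C^3}$ (depending on $m,n,p$) to absorb the resulting contributions. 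A concluding Young's inequality with exponent pair $(m(p+1)/(m(p+1)-1),\, m(p+1))$ splits the remaining $|\bar\nabla\eta|^2$-term into a small multiple of the target (absorbed into the left-hand side) plus the claimed $\int v^{2mGp}|\bar\nabla\eta|^{2m(p+1)}$.

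The main obstacle is verifying that the cross-term absorption actually closes: a direct Cauchy--Schwarz comparison between the cross terms and the diagonal ones is essentially tight at leading order in $p$ (the ratio of $(c_B+c_C)^2$ to $4c_Ac_D$ is $p^2/(p^2-1)>1$), so the positive gradient contributions $M$ and $N$ from~(\ref{key2}) are \emph{essential} (not merely convenient) for the existence of a compatible Young's parameter. Concretely, the needed inequality reduces at leading order in $p$ to $N + MG^2 > (2m-1)G^2$, which holds for $p$ large since $M, N$ are of order $p$ when the Simons parameter $\alpha$ is chosen in the narrow admissible window of Proposition~\ref{gsi}. This window is nonempty only when $G < (n+1)/n$, i.e., when $\Phi$ is sufficiently $C^3$-close to the classical area integrand; tracking how this threshold degrades with $m$ and $p$ accounts for the quantitative $m,n,p$-dependence of the hypothesis.
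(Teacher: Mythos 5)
Your proposal is correct, but it follows a route that is genuinely different in detail from the paper's, and it is worth recording the comparison. The paper re-derives a Simons inequality with the \emph{rescaled} exponents $(m(p-1),\ mGp)$ in \eqref{disaster2}, so that the natural subsolution is $f:=|S_\Phi|^{m(p-1)}v^{mGp}$, and then tests against $|S_\Phi|^{4m-2}f\,\eta^{2m(p+1)}$. The payoff is that the only derivatives of $v$ entering the integration by parts come bundled inside $\bar\nabla f$, so the IBP produces just three terms: a $-\int|S_\Phi|^{4m-2}\eta^{2m(p+1)}|\bar\nabla f|^2$ term with the right sign, a $\bar\nabla|S_\Phi|\cdot\bar\nabla f$ term absorbed by the $\tilde M$-term using the Young parameter $\gamma$, and a $\bar\nabla\eta\cdot\bar\nabla f$ term yielding the $|\bar\nabla\eta|^2$-contribution via the Young parameter $\beta$. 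In particular no $\bar\nabla|S_\Phi|\cdot\bar\nabla v$ cross term appears in the paper's bookkeeping; the price is having to re-verify positivity of $\tilde M,\tilde N$ for the $(m(p-1),mGp)$ parameters and push the $\gamma^{-1}$-constant under $\tilde M/(2mG)$ for $p$ large, as done in \eqref{dependence of ephi}. Your route instead keeps the same Simons inequality as Proposition~\ref{int1} (exponents $(p-1,Gp)$) and adjusts the weight $\psi=|S_\Phi|^{(2m-1)(p+1)}v^{(2m-1)Gp}\eta^{2m(p+1)}$ to reach the target exponents. This exposes a genuine $\bar\nabla|S_\Phi|\cdot\bar\nabla v$ cross term, and your observation that the bare Cauchy--Schwarz discriminant ratio is $p^2/(p^2-1)>1$ is exactly right: absorption is impossible without the $M$ and $N$ gradient terms from \eqref{key2}, and it becomes possible precisely because $a(p-1)N+bGp\,M\sim(2m-1)p^2\bigl(N+G^2M\bigr)$ is of order $p^3$ while the deficit $4(2m-1)^2G^2p^2$ is only of order $p^2$ (so for $p\gtrsim m$ the quadratic form closes). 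You also correctly identify the threshold $G<(n+1)/n$ as the same nondegeneracy condition \eqref{condition} that makes the $\alpha$-window nonempty, and the handling of the $\hat E_{(12)}$ divergence error by IBP and smallness of $\varepsilon(\Phi)$ (with $m,n,p$-dependence) matches the paper's absorption strategy. In short, both arguments choose the test weight so that the Simons leading term yields the target $|S_\Phi|^{2m(p+1)}v^{2mGp}\eta^{2m(p+1)}$ and both lean on the $M,N$ gradient terms for $p$ large; yours trades a cleaner Simons input for an extra cross-term analysis, while the paper's trades a re-parametrized Simons input for an IBP with no cross terms. Your version is correct as long as the Young parameters are chosen after fixing $\alpha$ in the open interval of \eqref{interval3} (shifted to $(p-1,Gp)$), and the $\varepsilon(\Phi)$-smallness is taken last.
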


\begin{proof}
 The proof of Proposition \ref{int2} would be  similar to that of Proposition \ref{int1}, but we have to more carefully choose our parameters (e.g. $m, \,p$, and $G$) to make the new terms in the estimates can be absorbed into the desired ones.  
 
 In \eqref{disaster2}, we replace $p$ with $m(p-1)$, and $q$ with $mGp$, where $m>\frac{n}{2}$, $p$  is a large constant to be determined in the end. We still denote $\left(2- \frac{1+2/n}{1+\theta}\right)$ by $B$ with $\theta=\frac{1}{n}$. Then, we have
\begin{equation}\label{key22}  
\begin{split}
\Delta_{\Phi}\left(|S_{\Phi}|^{m(p-1)}v^{mGp}\right)&\geq mG|S_{\Phi}|^{m(p-1)+2}v^{mGp} + m(p-1)\bar{\nabla}_{i}\tilde{E}_{(12)}^{i} \\
 &+ \tilde{M}|S_{\Phi}|^{m(p-1)-2}v^{mGp} |\bar{\nabla} |S_{\Phi}||^2 \\
  &+\tilde{N}|S_{\Phi}|^{m(p-1)} v^{mGp-2}|\bar{\nabla} v|^2,
\end{split}
\end{equation}
where
\begin{equation}\label{Ma}
    \tilde{M}=m(p-1)\left(m(p-1) -\left(2- (1-\varepsilon(\Phi))\left(\frac{1+2/n}{1+\theta}\right)\right)-\varepsilon(\Phi) -\alpha mGp\right),
\end{equation}
\begin{equation}\label{Na}
    \tilde{N}=mGp\left(mGp+1-\frac{p-1}{Gp}\varepsilon(\Phi) - \alpha^{-1}m(p-1)\right),
\end{equation}
\begin{equation}
\tilde{E}_{(12)}^{i} = |S_{\Phi}|^{m(p-1)-2}v^{mGp}\left(\bar{g}^{kl}T^{(1)}_{jlk}h^{ij}+\bar{g}^{il}T^{(2)}_{ijl}h^{kj}\right).
\end{equation}
Let $\eta$ be a test function with compact support. Multiplying on both sides of \eqref{key2} by $|S_{\Phi}|^{m(p+3)-2}v^{mGp}\eta^{2m(p+1)}$ and taking integral, we obtain
\begin{equation}\label{key23}
\begin{split}
    &\quad\, \int_{\Sigma} |S_{\Phi}|^{2m(p+1)}v^{2mGp}\eta^{2m(p+1)} \\
    &\leq \frac{1}{mG} \int_{\Sigma} |S_{\Phi}|^{m(p+3)-2}v^{mGp}\eta^{2p}\Delta_{\Phi}\left(|S_{\Phi}|^{m(p-1)}v^{mGp}\right) \\
    &-\frac{p-1}{G}\int_{\Sigma}|S_{\Phi}|^{m(p+3)-2}v^{mGp}\eta^{2m(p+1)}\bar{\nabla}_{i}\tilde{E}_{(12)}^{i} \\
    &- \frac{\tilde{M}}{mG}\int_{\Sigma} |S_{\Phi}|^{2m(p+1)-4}v^{2mGp} \eta^{2m(p+1)} |\bar{\nabla} |S_{\Phi}||^2\\ 
  &-\frac{\tilde{N}}{mG}\int_{\Sigma} |S_{\Phi}|^{2m(p+1)-2} v^{2mGp-2}\eta^{2m(p+1)}|\bar{\nabla} v|^2.
\end{split}
\end{equation}
 Denoting $|S_{\Phi}|^{m(p-1)}v^{mGp}$ by $f$, then we have
\begin{equation}
    \begin{split}
        \frac{1}{mG}\int_{\Sigma} |S_{\Phi}|^{4m-2}\eta^{2m(p+1)} f \Delta_{\Phi} f  &= -\frac{(4m-2)}{mG}\int_{\Sigma} |S_{\Phi}|^{4m-3}\eta^{2m(p+1)} f(\bar{\nabla} f, \bar{\nabla} |S_{\Phi}|)_{\Phi}  \\
        & \quad \,- \frac{2(p+1)}{G}\int_{\Sigma} |S_{\Phi}|^{4m-2}\eta^{2m(p+1)-1} f(\bar{\nabla} f, \bar{\nabla} \eta)_{\Phi}  \\
        & \quad\,- \frac{1}{mG}\int_{\Sigma} |S_{\Phi}|^{4m-2}\eta^{2m(p+1)} |\bar{\nabla}f|^2  .
    \end{split}
\end{equation}
Applying Young's inequality in the above with $\beta= \frac{1}{10m(p+1)}$ and  $\gamma= \frac{1}{10m+2}$, we have
\begin{equation}\label{key24}
    \begin{split}
         &\qquad\frac{1}{mG}\int_{\Sigma} |S_{\Phi}|^{4m-2}\eta^{2m(p+1)} f \Delta_{\Phi} f\\
         &\leq  - \frac{1}{mG}\left(1 - 2m(p+1)\beta - \frac{(4m-2)}{2}\gamma\right) \int_{\Sigma} |S_{\Phi}|^{4m-2}\eta^{2m(p+1)}|\bar{\nabla}f|^2   \\  
         & + \frac{(4m-2)}{2mG}\gamma^{-1} \int_{\Sigma}|S_{\Phi}|^{4m-4}\eta^{2m(p+1)}f^2|\bar{\nabla}|S_{\Phi}||^2 \\
         & + \frac{(p+1)}{G}\beta^{-1} \int_{\Sigma}|S_{\Phi}|^{4m-2}\eta^{2m(p+1)-2}f^2|\bar{\nabla}\eta|^2    \\
         & \leq  \frac{(4m-2)}{2mG}\gamma^{-1} \int_{\Sigma}|S_{\Phi}|^{4m-4}\eta^{2m(p+1)}f^2|\bar{\nabla}|S_{\Phi}||^2  \\
         & + \frac{p+1}{G}\beta^{-1} \int_{\Sigma}|S_{\Phi}|^{2m(p+1)-2}v^{2mGp}\eta^{2m(p+1)-2}|\bar{\nabla}\eta|^2.
    \end{split}
\end{equation}

Combining (\ref{key23}) and (\ref{key24}), we obtain 
\begin{equation}
    \begin{split}
   & \quad\,\int_{\Sigma} |S_{\Phi}|^{2m(p+1)}v^{2mGp}\eta^{2m(p+1)} \\
   &\leq \,\frac{(p+1)}{G}\beta^{-1} \int_{\Sigma} |S_{\Phi}|^{2m(p+1)-2}v^{2mGp}\eta^{2m(p+1)-2}|\bar{\nabla} \eta|^2 \\
    &- \,\frac{p-1}{G}\int_{\Sigma}|S_{\Phi}|^{m(p+3)-2}v^{mGp}\eta^{2m(p+1)}\bar{\nabla}_{i}\tilde{E}_{(12)}^{i} \\
  &- \frac{\tilde{M}}{mG}\int_{\Sigma} |S_{\Phi}|^{2m(p+1)-4}v^{2mGp} \eta^{2m(p+1)} |\bar{\nabla} |S_{\Phi}||^2\\ 
  &-\frac{\tilde{N}}{mG}\int_{\Sigma} |S_{\Phi}|^{2m(p+1)-2} v^{2mGp-2}\eta^{2m(p+1)}|\bar{\nabla} v|^2\\
  & +  \frac{(4m-2)}{2mG}\gamma^{-1} \int_{\Sigma}|S_{\Phi}|^{4m-4}\eta^{2m(p+1)}f^2|\bar{\nabla}|S_{\Phi}||^2.
\end{split}
\end{equation}

On the other hand, by \eqref{E12prop} we have 
\begin{equation}
|\tilde{E}_{(12)}^i|\leq \varepsilon(\Phi)|S_{\Phi}|^{m(p-1)+1}v^{mGp}.
\end{equation}
Using divergence theorem and Young's inequality, we obtain
\begin{equation}
\begin{split}
&\quad-\frac{p-1}{G}\int_{\Sigma}|S_{\Phi}|^{m(p+3)-2}v^{mGp}\eta^{2m(p+1)}\bar{\nabla}_{i}\tilde{E}_{(12)}^{i} \\
&=\frac{p-1}{G}\int_{\Sigma}\tilde{E}_{(12)}^{i}\bar{\nabla}_{i}\left(|S_{\Phi}|^{m(p+3)-2}v^{mGp}\eta^{2m(p+1)}\right) \\
&\leq \varepsilon(\Phi)\frac{p-1}{G}\biggl(\left(m(p+3)-2\right)\int_{\Sigma} |S_{\Phi}|^{2m(p+1)-2}v^{2mGp}\eta^{2m(p+1)}|\bar{\nabla}|S_{\Phi}||  \\
&+ mGp\int_{\Sigma} |S_{\Phi}|^{2m(p+1)-1}v^{2mGp-1}\eta^{2m(p+1)}|\bar{\nabla}v| \\
& + 2m(p+1)\int_{\Sigma} |S_{\Phi}|^{2m(p+1)-1}v^{2mGp}\eta^{2m(p+1)-1}|\bar{\nabla}\eta|\biggr)\\
&\leq \varepsilon(\Phi)\frac{p-1}{G} \biggl( 3m(p+3)\int_{\Sigma} |S_{\Phi}|^{2m(p+1)}v^{2mGp}\eta^{2m(p+1)}  \\
& + m(p+1)\int_{\Sigma} |S_{\Phi}|^{2m(p+1)-2}v^{2mGp}\eta^{2m(p+1)-2}|\bar{\nabla} \eta|^2  \\
&+\frac{1}{2}\left(m(p+3)-2\right)\int_{\Sigma} |S_{\Phi}|^{2m(p+1)-4}v^{2mGp}\eta^{2m(p+1)}|\bar{\nabla} |S_{\Phi}||^2 \\
&+ \frac{1}{2}mGp\int_{\Sigma} |S_{\Phi}|^{2m(p+1)-2}v^{2mGp-2}\eta^{2m(p+1)}|\bar{\nabla} v|^2
\biggr).
\end{split}
\end{equation}
Combining above we obtain
\begin{equation}\label{I2}
    \begin{split}
    &\quad\int_{\Sigma} |S_{\Phi}|^{2m(p+1)}v^{2mGp}\eta^{2m(p+1)} \\
    &\leq \left(\frac{(p+1)}{G}\beta^{-1}+\frac{2m(p^2-1)}{G}\varepsilon(\Phi)\right) \int_{\Sigma} |S_{\Phi}|^{2m(p+1)-2}v^{2mGp}\eta^{2m(p+1)-2}|\bar{\nabla} \eta|^2 \\
  &+\biggl(\frac{(p-1)\left(m(p+3)-2\right)}{2G}\varepsilon(\Phi)+ \frac{(4m-2)}{2mG}\gamma^{-1}- \frac{\tilde{M}}{mG}\biggr)\\
  &\times\int_{\Sigma} |S_{\Phi}|^{2m(p+1)-4}v^{2mGp}\eta^{2m(p+1)}|\bar{\nabla} |S_{\Phi}||^2\\ 
  &+\left(\frac{(p-1)mp}{2}\varepsilon(\Phi)- \frac{\tilde{N}}{mG}\right)\int_{\Sigma} |S_{\Phi}|^{2m(p+1)-2}v^{2mGp-2}\eta^{2m(p+1)}|\bar{\nabla} v|^2.
    \end{split}
\end{equation}
At this point, we will carefully choose parameters $p$ and make $\varepsilon(\Phi)$ small enough, so that the last two terms of \eqref{I2} are non-positive. Firstly, we fix $m>\frac{n}{2}$, and recall $\theta=\frac{1}{n}$, so we can again have
\begin{equation}
 0<B = \left(2-\frac{1+2/n}{1+\theta}\right)<1.
\end{equation}\\
Then we make $\|\Phi-1\|_{C^{3}(\mathbb{S}^n)}$ very small (depending on $p, m, n$),  so $\|G-1\|$ will be small enough and
\begin{equation}
   0<1-G B<1,
\end{equation}
 and for each  $p$ large enough (to be fixed later), there exists a constant $\alpha$ very close to the left end point of the following interval (see \eqref{interval3} with $p$ replaced by $m(p-1)$ and $q$ replaced by $mGp$) and whenever $\alpha$ lies in following interval, both $\tilde{M}$ and $\tilde{N}$ would be positive,
\begin{equation}\label{interval2}
  \left(\frac{m(p-1)}{mGp+1},\, \frac{m(p-1)-\left(2- \frac{1+2/n}{1+\theta}\right)}{mGp}\right).
\end{equation}
 Then we plug such choice of $\alpha$ into \eqref{Ma} and notice $\|\Phi-1\|_{C^{3}(\mathbb{S}^n)}$  (depending  on $p, m, n$) and $\varepsilon(\Phi)>0$ small enough, so we have
\begin{equation}\label{I3}
    \frac{\tilde{M}}{2mG} \geq \frac{mp(p-1)}{3(mGp+1)}\left(\left(1-GB\right)-\frac{1}{p}-\frac{B}{mp}\right)-\frac{1}{3m} >0,
\end{equation}
and
\begin{equation}
   \tilde{N}>0 .
\end{equation}
We now make $\|\Phi-1\|_{C^{3}(\mathbb{S}^n)}$  further small (depending  on $p, m, n$), so $\varepsilon(\Phi)>0$ will be small enough
such that
\begin{equation}\label{tildeM absorb}
    \frac{(p-1)\left(m(p+3)-2\right)}{2G}\varepsilon(\Phi)- \frac{\tilde{M}}{mG}\leq -\frac{\tilde{M}}{2mG},
\end{equation}
and
\begin{equation}
   \frac{(p-1)mp}{2}\varepsilon(\Phi)- \frac{\tilde{N}}{mG}\leq -\frac{\tilde{N}}{2mG}.
\end{equation}
Now using \eqref{I3}, \eqref{tildeM absorb} and choosing $p$ large enough (depending on $m>\frac{n}{2}$, e.g $p=1000(mn+1)$), we have
\begin{equation}\label{dependence of ephi}
\begin{split}
        &\quad\frac{(p-1)\left(m(p+3)-2\right)}{2G}\varepsilon(\Phi)+ \frac{(4m-2)}{2mG}\gamma^{-1}- \frac{\tilde{M}}{mG}\\
        &\leq \frac{(4m-2)}{2mG}(10m+2)-\frac{(1-GB)p}{4} \\
        &\leq  \frac{(4m-2)}{2mG}\gamma^{-1}- \frac{\tilde{M}}{2mG}\\
        &\leq \frac{(4m-2)}{2mG}(10m+2)-\frac{p}{4(n+1)}\\
        &\leq 0 .
\end{split}
\end{equation}
Hence, \eqref{I2} becomes
\begin{equation}
\begin{split}
  &\quad\,  \int_{\Sigma} |S_{\Phi}|^{2m(p+1)}v^{2mGp}\eta^{2m(p+1)} \\
  &\leq 
 \left(\frac{(p+1)}{G}\beta^{-1}+\frac{2m(p^2-1)}{G}\varepsilon(\Phi)\right) \int_{\Sigma} |S_{\Phi}|^{2m(p-1)}v^{2mGp}\eta^{2m(p-1)}|\bar{\nabla} \eta|^2  .
 \end{split}
\end{equation}
This together with Young's inequality implies that
\begin{equation}
\int_{\Sigma} |S_{\Phi}|^{2m(p+1)}v^{2mGp}\eta^{2m(p+1)}  \leq C_2(p, m, n, \Phi)\int_{\Sigma} v^{2mGp}|\bar{\nabla}\eta|^{2m(p+1)} .
\end{equation}

\end{proof}


\section{Proof of main theorem}

In this section, we put everything from previous two sections together to prove the Theorem \ref{Main Theorem}.
\begin{proof}[{\bf Proof of Theorem \ref{Main Theorem}:}]
For  $m=n>\frac{n}{2}$,  $p$ large enough (depending on $m, n$, e.g $p=1000(mn+1)$), by  Corollary \ref{aMVP}, Proposition \ref{int1} and Proposition \ref{int2} there is some constant $\delta(n)>0$ such that if $\|\Phi-1\|_{C^{3}(\mathbb{S}^n)}\leq \delta(n)$ 
then the two integral estimates in Proposition \ref{int1}, Proposition \ref{int2} hold.  By suitably choosing cut-off function $\eta$ on $\Sigma\cap B_{R}$ with $|\bar{\nabla}\eta|\leq 1/R$ on $\Sigma$, we obtain
    \begin{equation}
        R^{-\frac{n}{2}}\left(\int_{\Sigma\cap B_R} |S_{\Phi}|^{2p}v^{2Gp}\eta^{2p} d\mu_{\Sigma}\right)^{\frac{1}{2}}\leq \tilde{C}_1(n, \Phi)R^{-p}\mathop{\mathrm{sup}}\limits_{\Sigma\cap B_{2R}}v^{Gp}
    \end{equation}
    and
    \begin{equation}
    \begin{split}
        &\quad\, R^{\frac{1}{2}} \left(\int_{\Sigma\cap B_R} |S_{\Phi}|^{2n(p+1)}v^{2nGp}\eta^{2n(p+1)} d\mu_{\Sigma}\right)^{\frac{1}{2n}}  \\
       & \leq \tilde{C}_2(n, \Phi)R^{-p}\mathop{\mathrm{sup}}\limits_{\Sigma\cap B_{2R}}v^{Gp},
    \end{split}
    \end{equation}
where we also used $\mathcal{A}_{\Phi}(\Sigma\cap B_{R})\leq \tilde{C}(n, \Phi)R^n$ since anisotropic minimal graphs are area minimizers compared with the spherical competitors with radius $R$.

The above two inequalities together with \eqref{MVP_equality} in Corollary \ref{aMVP} and the gradient growth condition \eqref{gradient_growth} imply
\begin{equation}\label{last}
\begin{split}
         |S_{\Phi}|v^{G}(0)&\leq C(n, \Phi) R^{-1}\mathop{\mathrm{sup}}\limits_{\Sigma\cap B_{2R}}v^{G}\\
         &\leq C( n, \Phi) o(1) R^{G\varepsilon-1}.
\end{split}
\end{equation}
Then, we set $\varepsilon_0(n, \Phi)=G^{-1}$, where $G=G(n, \Phi)$ is defined in \eqref{G}, so $\varepsilon_0(n, \Phi)\to 1$ as  $\|\Phi-1\|_{C^{3}(\mathbb{S}^n)}\to 0$. Let $R\to \infty$, we have for every $\varepsilon\leq \varepsilon_0(n, \Phi)$ and $|\nabla u|$  satisfying \eqref{gradient_growth}, the second fundamental form vanishes everywhere and the corresponding smooth entire anisotropic minimal graph of $u$ must be a hyperplane. This completes the proof of Theorem \ref{Main Theorem}.
\end{proof}
\begin{rem}
We remark that by the work of Lin \cite{Lin} and the work of Colding and Minicozzi \cite{CM}, Bernstein theorem and related curvature estimates hold in dimension $n = 2$ under the hypothesis of only $C^2$-closeness between anisotropic integrand and the classical area integrand. It will be an interesting problem if our main result Theorem \ref{Main Theorem} still holds under  assuming only weaker integrand closeness condition (e.g $C^2$-closeness condition) between anisotropic integrand and the classical area integrand.\\ 
\end{rem}


\begin{rem}\label{nonlinear}
  There are nonlinear solutions for area functional case with gradient growth condition $|Du(x)| = O(|x|^{1+\frac{1}{n}})$ by the work of Simon \cite{Si}, and the lowest eigenvalue of the Jacobi operator of an area minimizing cone gives a lower bound on gradient growth of the above form (see \cite[Sec 5, Sec 6]{Si}). It would also be very interesting to consider whether the gradient growth condition in our main result Theorem \ref{Main Theorem} can be relaxed to $|Du(x)| = o(|x|)$  in the case of $C^3$-closeness to the area as in the result in \cite{EH}. This question is subtle and our perturbation argument may not be robust enough to justify this question.  However, to justify the question, one might be able to try some compactness argument  which leads to a contradiction to the result of Ecker-Huisken \cite{EH}.
\end{rem}



\end{document}